\newtheorem{thm}{Theorem}
\newtheorem{pro}{Proposition}
\theoremstyle{definition}
\newtheorem*{rem}{Remark}
\renewenvironment{proof}[1][\proofname] {\par\pushQED{\qed}\normalfont\topsep6\p@\@plus6\p@\relax\trivlist\item[\hskip\labelsep\bfseries#1\@addpunct{.}]\ignorespaces}{\popQED\endtrivlist\@endpefalse}
\def\[#1\]{\begin{align*}#1\end{align*}}
\newcommand{\R}{\mathbb{R}}
\newcommand{\N}{\mathbb{N}}
\newcommand{\ceq}{\coloneqq}
\newcommand{\E}{\mathbb{E}}
\renewcommand{\P}{\mathbb{P}}
\newcommand{\lrtx}[1]{\ \text{#1} \ }
\newcommand{\ltx}[1]{\ \text{#1}}
\newcommand{\I}{\mathds{1}}
\newcommand{\df}{\mathop{}\!\mathrm{d}}
\newcommand{\D}{\mathbb{D}}
\newcommand{\F}
{\mathscr{F}}
\newcommand{\lb}{\lbrack}
\newcommand{\rb}
{\rbrack}
\newcommand{\id}{\mathrm{id}}
\begin{document}
\title{Tail Probability and Divergent Series}
\author{Yu-Lin Chou\thanks{Yu-Lin Chou, Institute of Statistics, National Tsing Hua University, Hsinchu 30013, Taiwan,  R.O.C.; Email: \protect\url{y.l.chou@gapp.nthu.edu.tw}.}}
\date{}
\maketitle

\begin{abstract}
From mostly a measure-theoretic consideration, we show that for every nonnegative, finite, and $L^{1}$ function on a given finite measure space there is some nontrivial sequence of real numbers such that the series, obtained from summing over the term-by-term products of the reals and the summands of any divergent series with positive, vanishing summands such as the harmonic series, is convergent and no greater than the integral of the function. In terms of inequalities, the implications add additional information on mathematical expectation and the behavior of divergent series with positive, vanishing summands, and establish in a broad sense some new, unexpected connections between probability theory and, for instance, number theory.\\

{\noindent\textbf{Keywords:}} convergent series; divergent series with positive, vanishing summands; harmonic series of primes; measure theory; tail probability\\

{\noindent  \textbf{AMS MSC 2010:}}  28A99; 60E05; 40A30; 11L20    
            
\end{abstract}

\section{Introduction}

If $a \equiv (a_{n})_{n \in \N}$ is a sequence of positive real numbers,
\textit{i.e.} a sequence of reals $> 0$, let $H_{n}(a) \ceq \sum_{j=1}^{n}a_{j}^{-1}$
for all $n \in \N$; thus $a_{n} \ceq n$ for every $n \in \N$ implies
that each $H_{n}(a)$ is the $n$-th harmonic number. Take any sum
of the form $\sum_{n=1}^{0}$ to be $=0$, and write $H_{n}$ for
the $n$-th harmonic number for each $n \in \N$ ; then, for every
$x$ in the set $\R_{+}$ of all reals $\geq 0$, we have $\sum_{n=1}^{\lfloor x \rfloor}n^{-1}\I_{\lb H_{n}, +\infty \lb }(x) \leq x,$
the right-hand side of which is a luxury upper bound for $H_{\lfloor x \rfloor}$.
Indeed, the presence of the indicators further allows us to write
\[
\sum_{n \in \N}n^{-1}\I_{\lb H_{n}, +\infty \lb }(x) \leq x
\]for every $x \in \R_{+}$. If $\D^{x}$ denotes the Dirac measure
$A \mapsto \I_{A}(x)$ over $\R$ for every $x \in \R$, then the
above inequality holds for every $x \in \R_{+}$ if and only if \[
\sum_{n \in \N}n^{-1}\D^{x}([H_{n}, +\infty[) \leq \int_{\R} y \df \D^{x}(y)
\]for every $x \in \R_{+}$. Since, in a probabilistic interpretation,
a Dirac measure (restricted to the Borel sigma-algebra) over $\R$
is a probability distribution, called a degenerate distribution, over
$\R$, writing $\P$ for any given $\D^{x}$ with $x \in \R^{+}$
and denoting by $\id$ the identity automorphism on $\R_{+}$ imply
that \[
\sum_{n \in \N}n^{-1}\P(\lb H_{n}, +\infty \lb ) = \sum_{n \in \N}n^{-1}\P(\id \geq H_{n}) \leq \E \id.
\]Here (and throughout) $\E$ denotes the integration operator with
respect to the in-context underlying measure. 

Now, given any finite measure space $(\Omega, \F, \P)$, can we obtain
the last inequality above for every function in $L^{1}(\Omega)$ with
values in $\R_{+}$ under the mere additional assumptions that $(a_{n}^{-1})_{n \in \N}$
is vanishing and that $\sum_{n \in \N}a_{n}^{-1}$ diverges? Although
we will provide a simple proof that it is indeed affirmative, the
decision is not immediate as for every $X \in L^{1}(\Omega, \R_{+})$
we have \[
a_{n}^{-1}\P (X \geq H_{n}(a)) \leq a_{n}^{-1}\P (X \geq a_{n}^{-1}) \leq \E X
\]for every $n \in \N$. Since $\sum_{n \in \N}a_{n}^{-1}$ goes beyond
every bound, and since $X$ is in a sense quite arbitrary, the behavior
of the products present in the above inequalities is not \textit{a priori}
clear; thus it is not immediate regarding where summing over the products
would lead to, let alone asserting some relations between those series
and the integral. 

Since a probability measure is simply a suitably scaled finite measure,
throughout we will argue in terms of a probabilistically-inclined
language; our verbal narration then gets more flexible without loss
of legitimacy. For instance, we may now at will refer to the real
number $\D^{x}(\lb H_{n}, +\infty \lb )$ as a tail probability of
the random variable $\id$ and the integral $\int_{\R} y \df \D^{x}(y)$
as the expectation of $\id$. At the same time, the results are never
limited to the realm of probability theory. We will prove

\begin{thm}\label{thm1}

Let $(\Omega, \F, \P)$ be a probability space; let $X \in L^{1}(\Omega, \R_{+})$;
let $a$ be a sequence of positive reals whose reciprocals converge
to $0$; let $\sum_{n \in \N}a_{n}^{-1}$ be divergent. Then \[
\sum_{n \in \N}a_{n}^{-1}\P (X \geq H_{n}(a)) &\leq \E X \leq \sum_{n \in \N}a_{n}^{-1}\P(X \geq a_{n}^{-1});\\
\sum_{n \in \N}a_{n}^{-1}\P(X \geq H_{n}(a)) &\leq 1 + \sum_{n \in \N}\P(X \geq n);\\
\sum_{n \in \N}a_{n}^{-1}\P(X \geq a_{n}^{-1}) &\geq \sum_{n \in \N}\P(X \geq n). \tag*{$\qed$}
\]

\end{thm}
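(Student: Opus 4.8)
The plan is to route everything through the layer-cake (tail) representation $\E X = \int_{0}^{\infty}\P(X \geq t)\df t$, valid for every $X \in L^{1}(\Omega,\R_{+})$, together with the classical integer comparison $\sum_{n \in \N}\P(X \geq n) \leq \E X \leq 1 + \sum_{n \in \N}\P(X \geq n)$, which itself drops out of Tonelli applied to the pointwise identity $\sum_{n \in \N}\I_{\{X \geq n\}} = \lfloor X \rfloor$. Granting these, I would isolate the two genuinely new inequalities, namely the left-hand bound of the first displayed line and the right-hand bound $\E X \leq \sum_{n}a_{n}^{-1}\P(X \geq a_{n}^{-1})$ of that same line; the second and third displayed lines then follow by concatenation. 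Chaining the left-hand bound with $\E X \leq 1 + \sum_{n}\P(X \geq n)$ yields the second line, and chaining the right-hand bound with $\sum_{n}\P(X \geq n) \leq \E X$ yields the third.

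For the left-hand inequality I would read $\sum_{n}a_{n}^{-1}\P(X \geq H_{n}(a))$ as a Riemann-type under-sum. Setting $H_{0}(a) \ceq 0$, the increments satisfy $H_{n}(a) - H_{n-1}(a) = a_{n}^{-1}$, and because $\sum_{n}a_{n}^{-1}$ diverges the points $H_{n}(a)$ increase to $+\infty$, so the intervals $[H_{n-1}(a), H_{n}(a)[$ partition $[0,+\infty[$. Since $t \mapsto \P(X \geq t)$ is nonincreasing (hence Borel), on each such interval its value is at least $\P(X \geq H_{n}(a))$, whence $a_{n}^{-1}\P(X \geq H_{n}(a)) \leq \int_{H_{n-1}(a)}^{H_{n}(a)}\P(X \geq t)\df t$. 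Summing over $n$ and invoking the layer-cake formula collapses the right-hand side to $\E X$, giving the left-hand inequality and, as a by-product, the convergence of the series.

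For the right-hand inequality I would instead use Tonelli to move the sum inside the expectation, obtaining $\sum_{n}a_{n}^{-1}\P(X \geq a_{n}^{-1}) = \E\big[\sum_{n}a_{n}^{-1}\I_{\{X \geq a_{n}^{-1}\}}\big]$. The decisive point is that the two hypotheses on $a$ act in tandem here: because $a_{n}^{-1}\to 0$, for each fixed $\omega$ with $X(\omega) > 0$ all but finitely many indices satisfy $a_{n}^{-1} \leq X(\omega)$, and because $\sum_{n}a_{n}^{-1}$ diverges, deleting those finitely many terms still leaves a divergent sum; hence the inner series equals $+\infty$ on $\{X > 0\}$ and $0$ on $\{X = 0\}$, so it dominates $X$ pointwise. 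Monotonicity of $\E$ then gives $\E X \leq \sum_{n}a_{n}^{-1}\P(X \geq a_{n}^{-1})$, with equality precisely when $X = 0$ almost surely.

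The step I expect to require the most care is the left-hand inequality, since it carries essentially all the analytic content: one must confirm that the partition induced by $(H_{n}(a))_{n}$ genuinely exhausts $[0,+\infty[$ (this is exactly where divergence of $\sum_{n}a_{n}^{-1}$ enters) and that the monotone tail function is being lower-bounded at the correct endpoint of each interval. By contrast, the right-hand bounds are forced by the vanishing-plus-divergent hypotheses almost for free, the only subtlety being the harmless $\infty \cdot 0$ bookkeeping on $\{X = 0\}$; the governing observation is simply that these hypotheses drive the right-hand series to $+\infty$ except when $X$ vanishes almost surely.
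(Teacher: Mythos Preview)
Your argument is correct, but it proceeds along a different axis than the paper's. The paper does not touch the layer-cake integral for the first displayed line; instead it builds, inductively, measurable sets $A_{n} \ceq \{\, X \geq a_{n}^{-1} + \sum_{j<n}a_{j}^{-1}\I_{A_{j}} \,\}$ and proves the pointwise identity $X = \sum_{n}a_{n}^{-1}\I_{A_{n}}$ (this is the Evans--Gariepy/Federer representation). Monotone convergence then gives $\E X = \sum_{n}a_{n}^{-1}\P(A_{n})$ \emph{exactly}, and the two inclusions $\{X \geq H_{n}(a)\} \subseteq A_{n} \subseteq \{X \geq a_{n}^{-1}\}$ produce both halves of the first line simultaneously. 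Your Riemann-under-sum treatment of the left-hand inequality is cleaner and more elementary than the paper's route, while your right-hand inequality amounts to the observation that $\sum_{n}a_{n}^{-1}\P(X \geq a_{n}^{-1}) = +\infty$ unless $X = 0$ almost surely --- correct, but blunter than what the representation buys, since the paper actually exhibits a \emph{finite} series $\sum_{n}a_{n}^{-1}\P(A_{n}) = \E X$ sitting strictly between the two extremes. Both approaches agree on deriving the second and third lines by concatenation with the classical bound $\sum_{n}\P(X \geq n) \leq \E X \leq 1 + \sum_{n}\P(X \geq n)$.
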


This result greatly generalizes the previous intuitive observation.
Indeed, as we will illustrate, Theorem \ref{thm1} admits some interesting
implications regarding results in number theory and probability theory. 

The proof, to a great extent, depends on representing a measurable
$\R_{+}$-valued function as a (convergent) series whose summands
are the term-by-term products of some vanishing sequence of positive
reals that forms a divergent series and some sequence of (measurable)
indicators. Fortunately, a short, insightful, and elegant proof for
the indicated representation to be possible is ``almost'' known. 

The next section, Section 2, elaborates on the justification and presents
some intuitive discussions.

\section{Proof and Remarks}

Throughout this article, we fix a probability space $(\Omega, \F, \P)$.
As we argue in terms of a probabilistic language, we adopt some conventional
notation rules in probability theory, which is also convenient for
our purposes without costing clarity. If $X$ is a map defined on
$\Omega$, a set of the form $\{ X \lrtx{has a property} P \}$ means
$\{ \omega \in \Omega \mid X(\omega) \ltx{has the property} \}$;
when written next to the measure $\P$, such a set will take the form
$(X \lrtx{has the property} P)$. The notation rules are ``reasonably''
natural once we see that the form of the properties $P$ concerning
probability theory is usually complicated, and our case is not an
exception. 

It can be shown that for every $\F$-measurable $X: \Omega \to \R_{+}$
there are some $A_{1}, A_{2}, \dots \in \F$ such that $X = \sum_{n \in \N}n^{-1}\I_{A_{n}}$;
Evans and Gariepy \cite{eg} provides an elementary, short, and elegant
proof\footnote{It may be worthwhile to point out here that Evans and Gariepy \cite{eg} develops their  materials with respect to the Carath{\'e}odory paradigm. The approach that is (probably) more common belongs to the Radon paradigm. Besides, their proof applies under conditions that are more general in a certain direction; we slightly generalize the proof in another direction.} ,
which may be found under Theorem 1.12. But the argument is also ready for a slight generalization beyond considering the “harmonic coefficients” $n^{-1}$, which is in fact recorded (in the sense of Footnote 1) under Theorem 2.3.3 in Federer \cite{f} with a one-line proof sketch. For clarity and for both the reader's and our later reference, we shall make the slightly generalized argument, based on what is given Evans and Gariepy \cite{eg}, enter the following proof of Theorem \ref{thm1}:

\begin{proof}[Proof of Theorem \ref{thm1}]

Let $A_{1} \ceq \{ X \geq a_{1}^{-1} \}$; let $A_{n} \ceq \{ X \geq a_{n}^{-1} + \sum_{j=1}^{n-1}a_{j}^{-1}\I_{A_{j}} \}$
for all $n \geq 2$ by induction. Then $X \geq a_{1}^{-1}\I_{A_{1}}$
on $\Omega$. If there is some $n \in \N$ such that $X \geq \sum_{j=1}^{n}a_{j}^{-1}\I_{A_{j}}$
on $\Omega$, and if $X < \sum_{j=1}^{n+1}a_{j}^{-1}\I_{A_{j}}$ on
$\Omega$, then $\I_{A_{n+1}} > 0$ and hence $X \geq a_{n+1}^{-1} + \sum_{j=1}^{n}a_{j}^{-1}\I_{A_{j}} \geq \sum_{j=1}^{n+1}a_{j}^{-1}\I_{A_{j}}$
on $\Omega$, a contradiction. Since $X \geq \sum_{j=1}^{n}a_{j}^{-1}\I_{A_{j}}$
on $\Omega$ for all $n \in \N$, and since $X$ is finite everywhere
by assumption, we have \[
\sum_{n \in \N}a_{n}^{-1}\I_{A_{n}} < +\infty
\]on $\Omega$. Since $\sum_{n \in \N}a_{n}^{-1}$ is divergent by assumption,
for every $\omega \in \Omega$ there are infinitely many $n \in \N$
such that $\I_{A_{n}}(\omega) = 0$. It then follows that \[
0 \leq X(\omega) - \sum_{j=1}^{n}a_{j}^{-1}\I_{A_{j}}(\omega) < a_{n+1}^{-1} 
\]for all $\omega \in \Omega$ and for infinitely many $n \in \N$.
But the sequence $a_{1}^{-1}, a_{2}^{-1}, \dots$ is vanishing by
assumption, we have \[
X = \sum_{n \in \N}a_{n}^{-1}\I_{A_{n}}.
\]

Since $X$ is $L^{1}$ by assumption, the monotone convergence theorem
implies that \[
\E X = \sum_{n \in \N}a_{n}^{-1}\P (A_{n}) = \sum_{n \in \N}a_{n}^{-1}\P \bigg( X \geq a_{n}^{-1} + \sum_{j=1}^{n-1}a_{j}^{-1}\I_{A_{j}} \bigg) < +\infty.
\]Upon observing that $\P (X \geq H_{n}(a)) \leq \P (A_{n}) \leq \P (X \geq a_{n}^{-1})$
for all $n \in \N$, the first two inequalities pertaining to $\E X$
follow.

To shorten the argument, we refer the reader to Theorem 3.2.1 in Chung
\cite{c}; the result asserts, with a simple proof from a consideration
over the measurable sets $\{ n \leq Y < n+1 \}$ where $Y: \Omega \to \R_{+}$
is measurable-$\F$ and $n \in \N$, that \[
\sum_{n \in \N}\P (Y \geq n) \leq \E Y \leq 1 + \sum_{n \in \N}\P (Y \geq n)
\]for all $\F$-measurable $Y: \Omega \to \R_{+}$. Since $X \in L^{1}(\Omega, \R_{+})$
by assumption, the remaining inequalities follow.\end{proof}

\begin{rem}

{ \ }

\begin{itemize}[leftmargin=*]

\item Theorem \ref{thm1} is not probability-specific; the first
part of the argument above apparently applies to any finite measure,
and the second part is only subject to a suitable replacement of the
constant $1$ with the full finite measure of $\Omega$. 

\item Theorem \ref{thm1} is connected with various familiar divergent
series according as $a_{n} \ceq n$ or $\ceq n^{\delta}$ where $0 < \delta < 1$
is given, or $\ceq n\log n$, or $\ceq p_{n}$ where $p_{n}$ is the
$n$-th prime for each $n \in \N$; they all have positive components
such that their reciprocals form a vanishing sequence and a divergent
series. 

\item Although the harmonic series diverges, there is a nontrivial
way to ``stabilize'' its growth: Take any $L^{1}$, nonnegative
random variable $X$; find its tail probabilities of the form $\P (X \geq H_{n})$;
and take the sum of the products $n^{-1}\P(X \geq H_{n})$ over all
$n \in \N$. More unexpectedly, this procedure applies to divergent
series that grow in a much slower way; for instance, consider the
harmonic series of primes. In another sense, the procedure also provides
a nontrivial, measure-theoretic way to construct a convergent series
out of divergent series with positive, vanishing summands. 

\item As stated in the introduction, we have $a_{n}^{-1}\P(X \geq H_{n}(a)) \leq a_{n}^{-1}\P (X \geq a_{n}^{-1}) \leq \E X$
(in particular) for every $L^{1}$ random variable $X$, which prevents
a direct deduction for the behavior of the series formed with respect
to the first two terms. But the representation theorem of measurable,
$\R_{+}$-valued functions furnishes an assertion on the behavior
of the series in terms of definite inequalities.

\item Measurable functions are usually connected with series indirectly
by (partial) representations such as $\sum_{j=1}^{n2^{n}-1}j2^{-n}\I_{\lb j2^{-n}, (j+1)2^{-n} \lb } \circ f + n\I_{ \lb n, +\infty \lb } \circ f$.
\qed

\end{itemize}

\end{rem}

Under the assumptions of Theorem \ref{thm1}, we can further sharpen
the inequality secondly displayed in the statement of Theorem \ref{thm1}
provided that the points with positive probability in the range of
the random variable under consideration is suitably restricted:

\begin{pro}

Let the assumptions of Theorem \ref{thm1} take place. If, in addition,
there is some subset of $\N$ such that $\P (X = n) > 0$ for all
elements $n$ of the subset and the subset has measure $1$ with respect
to the induced measure of $\P$ by $X$, then

\[
\sum_{n \in \N}\P (X \geq n) \geq \sum_{n \in \N}a_{n}^{-1}\P (X \geq H_{n}(a)).
\]

\end{pro}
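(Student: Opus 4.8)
The plan is to sharpen the upper estimate for $\E X$ that underlies the second displayed inequality of Theorem~\ref{thm1}. Recall that that inequality was obtained by combining the left-hand bound $\sum_{n \in \N}a_{n}^{-1}\P(X \geq H_{n}(a)) \leq \E X$ of Theorem~\ref{thm1} with Chung's estimate $\E X \leq 1 + \sum_{n \in \N}\P(X \geq n)$. The additional hypothesis is designed precisely to remove the additive constant $1$ from Chung's upper bound, turning it into an equality; chaining the resulting equality with the left-hand bound from Theorem~\ref{thm1} then yields the assertion.

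First I would record that the hypothesis forces $X$ to be $\N$-valued almost surely. Writing $\P_{X}$ for the induced measure of $\P$ by $X$ and $S \subseteq \N$ for the prescribed subset, the assumption $\P_{X}(S) = 1$ reads $\P(X \in S) = 1$; since $S \subseteq \N$, this gives $\P(X \in \N) = 1$, and in particular $\P(X = 0) = 0$. Note that the positivity condition $\P(X = n) > 0$ for $n \in S$ merely pins down the support and plays no further role; all that matters is integer-valuedness up to a $\P$-null set.

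Next I would establish the exact tail-sum identity $\E X = \sum_{n \in \N}\P(X \geq n)$ for such an almost surely integer-valued $X$. The cleanest route is the pointwise representation $X = \sum_{n \in \N}\I_{\{X \geq n\}}$, which holds on the event $\{X \in \N\}$ and hence $\P$-almost everywhere: if $X(\omega) = k \in \N$, then $\sum_{n \in \N}\I_{\{X \geq n\}}(\omega) = \sum_{n=1}^{k}1 = k$. Applying the monotone convergence theorem to this representation gives $\E X = \sum_{n \in \N}\P(X \geq n)$ with no additive slack. This is exactly the integer-valued specialisation of the argument behind Chung's inequality: in the decomposition over the sets $\{n \leq X < n+1\}$, the excess that produces the constant $1$ equals the expected fractional part $\E(X - \lfloor X \rfloor)$, which vanishes here.

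Finally, combining the left-hand bound $\sum_{n \in \N}a_{n}^{-1}\P(X \geq H_{n}(a)) \leq \E X$ of Theorem~\ref{thm1} with the identity just obtained delivers $\sum_{n \in \N}a_{n}^{-1}\P(X \geq H_{n}(a)) \leq \E X = \sum_{n \in \N}\P(X \geq n)$, which is the claim. I do not expect a genuine obstacle here; the only point demanding a little care is the $\P$-null set on which $X$ may fail to be integer-valued. But since altering $X$ on a null set changes neither $\E X$ nor any tail probability $\P(X \geq n)$, both the representation and the resulting identity are unaffected, and the conclusion follows.
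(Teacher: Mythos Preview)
Your proof is correct and follows the same overall strategy as the paper: use the extra hypothesis to upgrade Chung's inequality to the exact tail-sum identity $\E X = \sum_{n \in \N}\P(X \geq n)$, then chain with the left-hand bound of Theorem~\ref{thm1}. The only difference is in how the identity is obtained: the paper passes through the continuous layer-cake formula $\E X = \int_{0}^{+\infty}\P(X \geq t)\,\df t$ via Fubini and then discretises using the partition $\{\,\rb j, j+1\rb\,\}_{j \geq 0}$, whereas you use the discrete pointwise representation $X = \sum_{n \in \N}\I_{\{X \geq n\}}$ directly and apply monotone convergence once; your route is marginally more elementary (no Fubini), while the paper's makes the connection to the general layer-cake identity explicit.
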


\begin{proof}

Since $X$ is finite by assumption, for every $\omega \in \Omega$
we have \[
X(\omega) = \int_{0}^{+\infty}\I_{\lb t, +\infty \lb} (X(\omega)) \df t.
\]But $X$ is also $L^{1}$ by assumption, an application of the monotone
convergence theorem and the Fubini's theorem gives \[
\E X = \int_{0}^{+\infty}\P (X \geq t) \df t.
\]By the assumed additional regularity of $X$, taking a partition of
$\rb 0, +\infty \lb$ by intervals such as $\{ \rb j, j+1 \rb \mid j = 0, 1, \dots \}$
implies \[
\E X = \sum_{n \in \N}\P(X \geq n);
\]the desired inequality then follows from Theorem \ref{thm1}. \end{proof}


\begin{thebibliography}{1} 
\bibitem{c} Chung, K.-L. (2000). \textit{A Course in Probability Theory}, third edition.  Academic Press.

\bibitem{eg} Evans, L. C. and Gariepy, R. F. (2015). 
\textit{Measure Theory and Fine Properties of Functions}, first edition. Chapman {\&} Hall.

\bibitem{f} Federer, H. (1996).
\textit{Geometric Measure Theory}, reprint of the first edition. Springer.
\end{thebibliography}
\end{document}